\newtheorem{thm}{\hskip\parindent Theorem}[section]
\newtheorem{lem}[thm]{\hskip\parindent Lemma}
\newtheorem{prb}[thm]{\hskip\parindent Problem}
\newtheorem{cor}[thm]{\hskip\parindent Corollary}
\theoremstyle{definition}
\newtheorem{dfn}{\hskip\parindent Definition}[section]
\newtheorem{rem}[dfn]{\hskip\parindent Remark}
\newcommand{\R}{\mathcal{R}}
\DeclareMathOperator{\wt}{wt}
\begin{document}

\title{Isomorphism of coefficient rings of~Buchstaber~and~Krichever~formal~groups
}
\author{E.\,Yu.~Bunkova}
\address{Steklov Mathematical Institute of Russian Academy of Sciences, Moscow, Russia}
\email{bunkova@mi-ras.ru}
\thanks{Supported by the Russian Foundation for Basic Research (Grant No. 20-01-00157 A)}

\begin{abstract}
In this work we give an explicit construction of the isomorphism of coefficient rings of Buchstaber and Krichever formal groups.
\end{abstract}

\maketitle

\begin{flushright}
{\it Dedicated to the Memory of I.M.~Krichever.} 
\end{flushright}

\section{Introduction}

The Buchstaber formal group was introduced in \cite{CC}. It's ring of coefficients was described in \cite{BU}. The Krichever formal group was introduced in \cite{BBKrich}. Both these formal groups have the same exponential, namely, the Krichever function that determines the Krichever genus \cite{Krichever-90}. There arises the question are this formal groups isomorphic, i.e. is there an isomorphism of their rings of coefficients  over $\mathbb{Z}$? The existence of such an isomorphism was proved in~\cite{Bak}. Here we construct this isomorphism explicitly and obtain the results on the rings of coefficients of~these formal groups that follow from this construction.

\section{Preliminaries and Definitions} \label{s11}

All rings considered in this paper will be supposed to be commutative rings with~unity.
 
\subsection{Formal groups}
\textit{A one-dimensional formal group law} (or, briefly, a \textit{formal group}) over a ring $R$ is a formal series
\begin{equation} \label{FGL}
F(u,v)=u+v+\sum\limits_{i,j\geqslant 1}a_{i,j}u^iv^j\in R[[u,v]]
\end{equation}
that satisfies the conditions of associativity
\begin{equation} \label{ass} 
F(F(u,v),w) = F(u,F(v,w))
\end{equation}
and commutativity \vspace{-5pt}
$$F(u,v)=F(v,u).$$
For the general theory of formal groups see \cite{Hazewinkel1978}.

The \emph{exponential of a formal group} $F(u, v)$ over $R$ is a formal series
$f(x) \in R \otimes \mathbb{Q}[[x]]$, determined by the initial conditions $f(0)=0, f'(0) = 1$ and the addition law
\begin{equation*}
f(x+y) = F(f(x),f(y)).
\end{equation*}
Thus, the substitution $u = f(x)$, $v = f(y)$ linearizes any formal group $F(u,v)$ over $R \otimes \mathbb{Q}$.
The series $g(u)$ functionally inverse to $f(x)$ is called \emph{logarithm of the formal group} $F(u,v)$.

In the ring $R$ one can select a subring $R_F$ of coefficients for the formal group $F(u,v)$.
By definition, the ring $R_F$ is multiplicatively generated by the coefficients $a_{i,j}$ in \eqref{FGL}.
In~the case $R = R_F$ the formal group $F(u,v)$ over $R$ is called a \emph{generating formal group}. 

A formal group $\mathcal{F}_U(u,v) = u + v + \sum \alpha_{i,j} u^i v^j$ over a ring $\R_U$ is called a
{\it universal formal group}
if for any formal group $F(u,v)$ over a ring $R$
there exists a unique ring homomorphism~$h_F\colon \R_U \to R$ such that $F(u,v) = u + v + \sum h_F(\alpha_{i,j}) u^i v^j$. 
The ring homomorphism~$h_F$ is called the \emph{classifying homomorphism}.
Further we suppose that the ring homomorphism $h_F$ is graded,
that is, the ring $R$ is graded and $\wt a_{i,j} = - 2 (i + j - 1)$.  
See Lemma 2.4 in \cite{BU} for the existence of the universal formal group.

When we consider formal groups of a given form, following \cite{BU}, we will speak of the {\it universal formal group of a given form}.
Namely, this is a formal group $\mathcal{F}_\#(u,v) = u + v + \sum \alpha_{i,j}^\# u^i v^j$ of this form over a ring $\R_\#$ such that for any formal group $F(u,v)$ of this form over a ring $R$
there exists a unique ring homomorphism~$h_\#\colon \R_\# \to R$ such that $F(u,v) = u + v + \sum h_F^\#(\alpha_{i,j}^\#) u^i v^j$.

\subsection{The Hirzebruch genus}
\text{}

Let $K$ be an associative commutative unital algebra over $\mathbb{Q}$.

The Hirzebruch genus is one of the most important classes of invariants of manifolds. A genus is a ring homomorphism
$L_f\colon \Omega_U \to K$ from the cobordism ring of stable complex manifolds $\Omega_U$ to $K$.
The Hirzebruch genus construction (see Sections 13 and 14 of \cite{CC}) matches such a genus to the series
\begin{equation} \label{fz}
f(z) = z + \sum_{k=1}^{\infty} f_k z^{k+1},  
\end{equation}
with coefficients~$f_k$ in~$K$.

There is an important property of $R$-integrability (see Section 14 of \cite{CC}),
that the image of $L_f$ belongs to the subring $R$ of the ring $K$.
There is a one-to-one correspondence between the set of formal groups over $R$ and the set of ring homomorphisms $\Omega_U \to R$.
Wherein the formal group $F$ corresponds to the ring homomorphism, determined by the Hirzebruch genus $L_f$, where $f$ is the exponential of this group with coefficients in $K = R \otimes \mathbb{Q}$.
Thus, to study the property of $R$-integrality of the Hirzebruch genus,
it is sufficient to 
construct the formal group corresponding to it and to study its ring of~coefficients.

\subsection{The Krichever function}
\label{s12} 
The classes of series for which the Hirzebruch genus possesses special properties are of~interest.
For example, the Krichever genus has the rigidity property~\cite{Krichever-90}
for manifolds with $S^1$-equivariant $SU$-structure. By \cite{B19} (see also Section E.5 of \cite{T} and \cite{Krichever-90}), one can define the the Krichever genus as a~Hirzebruch genus $L_f$ where the Krichever function $f(z)$ satisfies a differential equation of~the form
\begin{equation*}
f(z) f'''(z) - 3 f'(z) f''(z) = 6 q_1 f'(z)^2 + 12 q_2 f(z) f'(z) + 12 q_3 f(z)^2.
\end{equation*}
Classically (see \cite{Krichever-90}), the Krichever function  has the form 
\begin{equation} \label{fKr} 
{ \sigma(z) \sigma(\rho) \over \sigma(\rho - z)} \exp(\alpha z - \zeta(\rho) z),
\end{equation}
where $\sigma$ and $\zeta$ are Weierstrass functions (see \cite{Iso}, \cite{WW}).

\subsection{Buchstaber formal group} \label{s13}

The \emph{Buchstaber formal group} \cite{Buc90} over the ring $R$ is a formal group of the form 
\begin{equation} \label{FB}
F_B(u,v) =\frac{u^2 A(v) - v^2 A(u)}{u B(v) - v B(u)}, 
\end{equation}
where $A(u), B(u) \in R[[u]]$ and $A(0) = B(0) = 1$.
Let us introduce the notations
\begin{align*}
A(u) &= 1 + \sum_{k=1}^\infty A_k u^k, & B(u) &= 1 + \sum_{k=1}^\infty B_k u^k. 
\end{align*}
Note that the right hand side of \eqref{FB} does not depend on the coefficients $A_2$ and~$B_1$, therefore, further we set $A_2 = B_1 = 0$.
 
In~\cite{Buc90} (see also \cite[Theorem E~5.4]{T}) it was shown that the exponential of the universal Buchstaber formal group is the  function~\eqref{fKr}
(or it's degeneration).
The Buchstaber formal group is the universal formal group for the Krichever genus, that is, the exponential of the~universal Buchstaber formal group
is given by the Krichever function (see Section~\ref{s12}).

The ring of coefficients of the universal formal group of the form~\eqref{FB} is described in~\cite{BU}. Here we will give the results and notation from~\cite{BU} that we will need in the following sections.

\begin{lem}[Lemma 2.9 in \cite{BU}] \label{lr}
 Set $R_B = \mathbb{Z}[A_1, A_3, A_4, \ldots, B_2, B_3, B_4, \ldots]/I_{ass}^B$, where~$I_{ass}^B$ is the associativity ideal for the Buchstaber formal group \eqref{FB} generated by the relation \eqref{ass}. Then the formal group \eqref{FB} is~generating over $R_B$.
\end{lem}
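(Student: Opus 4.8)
The plan is to show two things: first that the formal series $F_B(u,v)$ defined by \eqref{FB}, once we impose the associativity relation \eqref{ass} by passing to the quotient ring $R_B$, is genuinely a well-defined formal group law over $R_B$; and second that $R_B$ is exactly its ring of coefficients, i.e. $R_B = (R_B)_{F_B}$ in the notation of the preliminaries. The key preliminary observation is that $F_B$ as written is a ratio of two power series, so I would begin by verifying that the denominator $u B(v) - v B(u)$ is invertible as needed. Expanding, $u B(v) - v B(u) = (u - v) + \sum_{k\geqslant 2}(B_k u v^k - B_k v u^k)$, and since $B(0)=1$ the leading term is $u-v$; one checks that after dividing out the antisymmetric factor the quotient has constant term $1$, so $F_B(u,v) = (u+v) + (\text{higher order})$ and the coefficients $a_{i,j}$ all lie in $\mathbb{Z}[A_1,A_3,A_4,\ldots,B_2,B_3,\ldots]$, the polynomial ring before quotienting.

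Next I would record the symmetry and compute the coefficients explicitly enough to see their weights. Commutativity $F_B(u,v)=F_B(v,u)$ is immediate from the antisymmetric form of numerator and denominator. For associativity, the content of Lemma 2.9 of \cite{BU} is precisely that the obstruction to \eqref{ass} for $F_B$ generates the ideal $I_{ass}^B$, so that by definition of the quotient $R_B = \mathbb{Z}[A_1,A_3,\ldots,B_2,\ldots]/I_{ass}^B$ the law $F_B$ satisfies \eqref{ass} over $R_B$. Thus $F_B$ is a formal group over $R_B$, which is the first half of what "generating" requires.

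The substantive claim is that $R_B$ is multiplicatively generated by the coefficients $a_{i,j}$ of $F_B$, i.e. that every generator $A_1, A_3, A_4, \ldots, B_2, B_3, \ldots$ can be recovered as a polynomial in the $a_{i,j}$. The strategy is to read off low-order coefficients of $F_B$ and invert the relationship triangularly with respect to weight. Using $\wt a_{i,j} = -2(i+j-1)$ and the gradings $\wt A_k = \wt B_k = -2k$, one expands $F_B$ to successive orders and shows that at each weight the new generator appearing there ($A_k$ or $B_k$) occurs linearly with unit coefficient in some $a_{i,j}$ of the same weight, modulo products of lower-weight generators already shown to lie in $(R_B)_{F_B}$. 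Carrying this induction on total degree expresses each $A_k$ and $B_k$ as a polynomial in the $a_{i,j}$ with integer coefficients, hence $R_B$ is generated by the $a_{i,j}$ and is generating over itself.

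The main obstacle I anticipate is the bookkeeping of the power-series inversion of the denominator together with the triangularity argument: one must confirm that at each weight the relevant $a_{i,j}$ really does contain the corresponding generator with an invertible (indeed unit, so $\pm 1$) coefficient over $\mathbb{Z}$, rather than only after inverting some integer, since the whole point is an integral statement. Establishing this unipotent triangular structure—and checking it is not disturbed by the associativity relations imposed in passing to $R_B$—is the crux; the remaining verifications (well-definedness of the quotient, commutativity, the form $u+v+\cdots$) are routine by comparison.
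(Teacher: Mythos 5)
The paper itself offers no proof of this lemma---it is imported verbatim from \cite{BU} (Lemma 2.9 there)---so your proposal has to be judged on its own merits rather than against an argument in the text. Your plan is the right one: check that \eqref{FB} is a well-defined series of the form $u+v+\cdots$ over the polynomial ring (numerator and denominator are both antisymmetric, hence divisible by $u-v$, and the reduced denominator has constant term $1$); commutativity is free by symmetry; associativity holds over $R_B$ by the very definition of the quotient; and the substantive point is that each $A_k$ and $B_k$ is an \emph{integer} polynomial in the coefficients $a_{i,j}$. The weakness is that you leave exactly this substantive point---the unit-coefficient triangularity---as an ``anticipated obstacle'' instead of establishing it, and it is the entire content of the lemma. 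It can be closed with no bookkeeping of the inverted denominator at all: write $F_B(u,v) = u + \sum_{j\geqslant 1} F_j(u)\, v^j$ and extract the coefficients of $v$ and of $v^2$ from the identity $F_B(u,v)\,\bigl(u B(v) - v B(u)\bigr) = u^2 A(v) - v^2 A(u)$. Using $B_1=0$ and $A_2=0$ this yields
\begin{align*}
F_1(u) &= B(u) + A_1 u, & A(u) &= B(u)\,\bigl(B(u) + A_1 u\bigr) - B_2 u^2 - u\, F_2(u),
\end{align*}
so the first identity gives $A_1 = a_{1,1}$ and $B_k = a_{k,1}$ for $k\geqslant 2$, and then the second expresses every $A_k$ as an integer polynomial in $A_1$, $B_2,\dots,B_k$ and $a_{k-1,2}$, hence in the $a_{i,j}$. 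No induction on weight and no inversion of the denominator is needed; the coefficients really do appear with coefficients $\pm 1$, confirming the structure you only conjectured.

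Your closing worry---that the triangular structure might be ``disturbed by the associativity relations imposed in passing to $R_B$''---is a non-issue, and seeing why simplifies the whole argument: the two identities above hold in the free polynomial ring $\mathbb{Z}[A_1, A_3, A_4, \ldots, B_2, B_3, \ldots]$, and polynomial identities descend to every quotient. Consequently the subring of $R_B$ generated by the images of the $a_{i,j}$ contains the images of all $A_k$ and $B_k$, i.e.\ it is all of $R_B$, which is precisely what ``generating'' requires.
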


By construction, the ring $R_B$ is the ring of coefficients of the universal Buchstaber formal group.

For natural numbers $m_1, \ldots, m_k$ we denote by $(m_1, \ldots, m_k)$ the greatest common divisor of these numbers. Using Euclidean algorithm, one can find integers $\lambda_1, \ldots, \lambda_k$ such that $\lambda_1 m_1 + \ldots \lambda_k m_k = (m_1, \ldots, m_k)$. For $n \geqslant 1$ set 
\[
 d(n+1) = \left(\begin{pmatrix}
                 n+1 \\ 1
                \end{pmatrix}, \begin{pmatrix}
                 n+1 \\ 2
                \end{pmatrix}, \ldots,
                \begin{pmatrix}
                 n+1 \\ n
                \end{pmatrix}
 \right).
\]
By Kummer's theorem (see Theorem 9.2 in \cite{BU}) we have
\[
 d(n+1) = \left\{ \begin{matrix}
                   p, & \text{ if } n+1 = p^k \text{ where } p \text{ is a prime},  \\
                   1 & \text{ if not.}
                  \end{matrix}
 \right.
\]
For each $n$ one can find a set $(\lambda_1, \ldots, \lambda_n)$ such that the equality holds
\[
 \lambda_1 \begin{pmatrix}
                 n+1 \\ 1
                \end{pmatrix} + \lambda_2 \begin{pmatrix}
                 n+1 \\ 2
                \end{pmatrix} + \ldots + \lambda_n \begin{pmatrix}
                 n+1 \\ n
                \end{pmatrix} = d(n+1).
\]

For the formal group \eqref{FGL} over a ring $R$ we introduce the element $e_n \in R$ using the~relation 
\begin{equation} \label{en}
 e_n = \lambda_1 a_{1,n} + \lambda_2 a_{2,n-1} + \ldots + \lambda_n a_{n,1}. 
\end{equation}
We introduce the ideal $I^2$ in the ring $R$ generated by the elements of the form $a_{i_1, j_1} a_{i_2, j_2}$, where $i_1, i_2, j_1, j_2 \geqslant 1$. In the case of a generating formal group $F$ over $R$, the ideal $I^2$ is~the ideal of decomposible elements of $R$. For $a \in R$ we denote by $\hat a$ the corresponding element $\hat a \in \hat R = R / I^2$.

\begin{dfn}[Definition 2.11 in \cite{BU}]
For a formal group $F$ over the ring $R$ the function $\rho: \mathbb{N} \to \{ \mathbb{N} \cup \infty \}$ is such that $\rho(n)$ is the order of the element $\hat e_n$ in the ring~$\hat R = R / I^2$.
\end{dfn}

\begin{rem}[Remark 4.4 in \cite{BU}]
For a generating formal group $F$ over $R$ the definition of the function $\rho$ does not depend on the coefficients in \eqref{en}. In this case the multiplicative generators of $R$ are the elements $e_n$ such that $\rho(n)>1$.
\end{rem}

\begin{cor}[Corollary 2.10 in \cite{BU}] \label{TBg}
The ring homomorphism $h_B: \mathcal{R}_U \to R_B$, classifying the Buchstaber formal group over $R_B$, is an epimorphism. One can select the set of multiplicative generators $\{e_1, \ldots, e_n, \ldots \}$ in $\mathcal{R}_U$ such that the set of non-zero elements $h_B(e_n)$ is a minimal set of generators of $R_B$.
\end{cor}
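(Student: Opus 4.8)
The plan is to prove surjectivity first, then fix a convenient polynomial generating set of $\R_U$, and finally transport it through $h_B$ while keeping track of decomposables.

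First I would establish that $h_B$ is an epimorphism. By the universal property of $\R_U$ we have $h_B(\alpha_{i,j}) = a_{i,j}$, the coefficients of the Buchstaber formal group \eqref{FB}. Since the Lazard ring $\R_U$ is, by construction, generated as a ring by the coefficients $\alpha_{i,j}$, the image $h_B(\R_U)$ is the subring of $R_B$ generated by the $a_{i,j}$. But by Lemma \ref{lr} the Buchstaber formal group is generating over $R_B$, i.e. $R_B$ is exactly that subring. Hence $h_B$ is surjective.

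Next I would fix the generators $\{e_n\}$ of $\R_U$. The universal formal group is generating over $\R_U$ (again by construction), so the Remark above (Remark 4.4 in \cite{BU}) applies: its multiplicative generators are the $e_n$ with $\rho(n)>1$, the order being computed in $\hat\R_U = \R_U/I^2$. By Lazard's theorem (see \cite{Hazewinkel1978}) $\R_U$ is a polynomial ring over $\mathbb{Z}$ with one generator in each degree $-2n$, so the indecomposable group $\hat\R_U$ in degree $-2n$ is free of rank one; the role of $d(n+1)$ and of the Euclidean coefficients $\lambda_i$ in \eqref{en} is precisely to ensure that $\hat e_n$ is a \emph{generator} (not a proper multiple) of this free group, whence $\rho(n)=\infty$ for every $n$. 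Consequently $\{e_1, e_2, \dots\}$ is a set of polynomial generators of $\R_U$.

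Finally I would push forward and check minimality. Since $h_B$ is surjective, the images $h_B(e_n)$ generate $R_B$; because the Buchstaber coefficients are $a_{i,j}=h_B(\alpha_{i,j})$ and the same $\lambda_i$ are used, $h_B(e_n)$ is exactly the element $e_n$ of \eqref{en} for the Buchstaber formal group. As $h_B$ carries $I^2$ of $\R_U$ into $I^2$ of $R_B$, it induces a surjection $\hat h_B\colon \hat\R_U \to \hat R_B$, and $\hat h_B(\hat e_n)$ is the class of the Buchstaber $e_n$ in $\hat R_B$, whose order is by definition $\rho(n)$ of that group. Applying the Remark above to the generating formal group over $R_B$ shows that the $h_B(e_n)$ with $\rho(n)>1$ form a minimal set of generators of $R_B$, and for such $n$ the class $\hat e_n\neq 0$ in $\hat R_B$, so $h_B(e_n)\neq 0$. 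For the remaining indices $\rho(n)=1$ means $\hat e_n=0$ in $\hat R_B$, i.e. $h_B(e_n)$ is decomposable; writing it as a polynomial with quadratic-and-higher terms in the lower generators $h_B(e_m)$ and subtracting the corresponding polynomial in the $e_m$ from $e_n$ replaces $e_n$ by a new generator with image $0$, without disturbing $\hat e_n$ in $\hat\R_U$ or the generating property of $\{e_n\}$. After this triangular adjustment the nonzero elements among the $h_B(e_n)$ are exactly those with $\rho(n)>1$, which is the desired minimal generating set.

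The hard part will be the second step: verifying that the combination \eqref{en} built from $d(n+1)$ and the $\lambda_i$ produces a generator of the rank-one indecomposable group in each degree rather than a multiple of it. This rests on the classical computation of symmetric $2$-cocycles of the Lazard ring modulo decomposables, where $\hat a_{i,j}$ is proportional to $\tfrac{1}{d(n+1)}\binom{n+1}{i}$, so that $\hat e_n=\sum_i \lambda_i \hat a_{i,n+1-i}$ collapses to a single generator via the defining relation $\sum_i \lambda_i \binom{n+1}{i}=d(n+1)$.
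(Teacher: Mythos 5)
This statement is not proved in the paper at all: it is imported verbatim as Corollary~2.10 of \cite{BU}, so there is no internal proof to compare your attempt against. Your reconstruction is correct and runs along the lines that the cited source sets up: surjectivity of $h_B$ follows because $\R_U$ is generated by the $\alpha_{i,j}$ and, by Lemma~\ref{lr}, $R_B$ is generated by their images $a_{i,j}$; the choice of $e_n$ in \eqref{en} via the Euclidean coefficients $\lambda_i$ and $d(n+1)$ is designed precisely so that $\hat e_n$ generates the rank-one indecomposable group of $\R_U$ in each degree, and you correctly isolate the one nontrivial ingredient behind this, namely Lazard's comparison lemma on symmetric $2$-cocycles, where $\hat a_{i,j}$ is $\binom{n+1}{i}/d(n+1)$ times a fixed generator. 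Your final triangular adjustment --- replacing $e_n$ by $e_n$ minus a polynomial with quadratic-and-higher terms in lower-degree generators whenever $h_B(e_n)$ is decomposable (i.e. $\rho_B(n)=1$), so that the adjusted generator maps to zero --- is legitimate, since by the grading the correction involves only generators of lower weight and does not change classes modulo $I^2$ in $\R_U$, while for $\rho_B(n)>1$ the image $h_B(e_n)$ survives in $\hat R_B$ and hence is nonzero; minimality then follows from the quoted Remark~4.4 of \cite{BU} applied to the generating Buchstaber formal group over $R_B$.
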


\begin{thm}[Theorem 6.1 in \cite{BU}] \label{tBU}
The ring $R_B$ is multiplicatively generated by the elements $h_B(e_n)$, where $n = 1, 2, 3, 4$, $n = p^r$ (where $r \geqslant 1$ and $p$ is prime), and $n = 2^k-2$ (where $k>2$). For the function $\rho(n) = \rho_B(n)$ we have
\[
\rho_B(n) = \left\{ \begin{matrix}
                     \infty & \text{ for } n = 1,2,3,4,\\ 
                     p & \text{ for } n = p^r, n \ne 2,3,4, \text{ and } r \geqslant 1 ,\\
                     2 & \text{ for } n = 2^k-2, k \geqslant 3,\\ 
                     1 & \text{ in all other cases.}\\ 
                    \end{matrix}
 \right.
\]
The ring $R_B$ has torsion only of order two.
\end{thm}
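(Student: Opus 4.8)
The plan is to push everything into the group of indecomposables $\hat R_B = R_B/I^2$ and then into a concrete arithmetic of binomial coefficients. Since $F_B$ is generating over $R_B$ (Lemma~\ref{lr}), the Remark following Definition~2.11 applies: the minimal multiplicative generators of $R_B$ are exactly the $e_n$ with $\rho_B(n)>1$, and $\rho_B(n)$ is by definition the order of $\hat e_n$ in $\hat R_B$. So it suffices to determine the graded abelian group $\hat R_B$ together with the classes $\hat e_n$. As an orienting remark, computing the logarithm from $g'(u)=\bigl(\partial_v F_B(u,v)\big|_{v=0}\bigr)^{-1}$ and using $A_2=B_1=0$ gives the clean formula $g'(u)=(B(u)+A_1 u)^{-1}$; this shows that over $\mathbb{Q}$ the group law depends only on $A_1$ and $B(u)$ and that $R_B\otimes\mathbb{Q}=\mathbb{Q}[e_1,e_2,e_3,e_4]$, the four parameters of the Krichever function~\eqref{fKr}, a fact I will reuse in the torsion argument.

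The core step is linearization. Expanding \eqref{FB} modulo decomposable elements, the degree-$(n+1)$ coefficients become $\mathbb{Z}$-linear in the two generators $A_n,B_n$, with $\hat a_{1,n}=\hat a_{n,1}=\hat B_n$ and $\hat a_{i,n+1-i}=2\hat B_n-\hat A_n$ for the interior indices $2\leqslant i\leqslant n-1$. On the other hand, reducing the associativity relation~\eqref{ass} modulo decomposables turns it into the statement that $P(u,v):=\sum a_{i,j}u^iv^j$ is a symmetric $2$-cocycle; by Lazard's theorem \cite{Hazewinkel1978} the module of such cocycles in degree $n+1$ is generated by the primitive element with coefficients $\binom{n+1}{i}/d(n+1)$, so in $\hat R_B$ one has $\hat a_{i,n+1-i}=\hat e_n\,\binom{n+1}{i}/d(n+1)$, which is also exactly the cocycle-coefficient description \eqref{en} of $e_n$. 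Comparing the two expressions for the interior coefficients forces
\[
\frac{\binom{n+1}{i}-\binom{n+1}{i'}}{d(n+1)}\,\hat e_n=0,\qquad 2\leqslant i,i'\leqslant n-1,
\]
and a Smith-normal-form computation of this presentation shows that the degree-$(n+1)$ part of $\hat R_B$ is cyclic, generated by $\hat e_n$, of order
\[
\rho_B(n)=\gcd_{2\leqslant i<i'\leqslant n-1}\frac{\binom{n+1}{i}-\binom{n+1}{i'}}{d(n+1)}.
\]
For $n=1,2,3,4$ there is no interior pair, or the differences vanish, so $\hat e_n$ is free and $\rho_B(n)=\infty$; for $n\geqslant 5$ the gcd is a finite positive integer, in agreement with $R_B\otimes\mathbb{Q}=\mathbb{Q}[e_1,e_2,e_3,e_4]$.

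It then remains to evaluate this greatest common divisor, and I expect this to be the first main obstacle. Using Kummer's theorem for $d(n+1)$ together with Lucas' congruences for $\binom{n+1}{i}$ modulo a prime, one must show that the gcd equals $p$ precisely when $n=p^r$ (with $n\neq 2,3,4$), equals $2$ precisely when $n=2^k-2$ with $k\geqslant 3$, and equals $1$ otherwise. The delicate points are separating the prime-power case from the exceptional case $n=2^k-2$ (both can contribute the prime $2$) and verifying that in all remaining cases some prime fails to divide at least one difference, so that the gcd collapses to $1$; this step establishes the stated values of $\rho_B(n)$ and, through the Remark, the list of multiplicative generators.

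Finally, the assertion that $R_B$ has torsion only of order two is of a different nature, since it concerns $R_B$ as an abelian group rather than its indecomposables. A relation $\rho_B(n)=p$ only says $p\,e_n\in I^2$; for odd $p$ this is a relation of the triangular form $p\,e_n=Q_n(e_1,e_2,\ldots)$ with $Q_n$ in the lower generators, and adjoining such a fraction to a polynomial ring preserves torsion-freeness (already $\mathbb{Z}[x,y]/(py-x^2)$ is torsion-free while its indecomposables carry $p$-torsion). Inverting $2$ therefore makes all surviving relations triangular over $\mathbb{Q}$ and exhibits $R_B[1/2]$ as a subring of $\mathbb{Q}[e_1,e_2,e_3,e_4]$, so every torsion element of $R_B$ is $2$-primary. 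The remaining, and I think hardest, point is to bound the exponent of the $2$-torsion by $2$: here the simple triangular picture fails because the two families $n=2^r$ and $n=2^k-2$ both have $\rho_B=2$, and their integral interaction must be analyzed directly, in the spirit of the classical order-two torsion of the $SU$-type bordism ring carrying the Krichever genus, to rule out elements of order $4$.
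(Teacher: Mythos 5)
The first thing to say is that the paper does not prove this statement at all: Theorem~\ref{tBU} is imported verbatim as Theorem~6.1 of \cite{BU} and is used as an input to the paper's own main result (the isomorphism $R_B=R_K$ in Section~\ref{s2}). So there is no proof in this paper to compare you against; the relevant comparison is with \cite{BU}, and in spirit your plan does follow that source's method. Your scaffolding is correct as far as it goes: the linearization of \eqref{FB} modulo $I^2$ indeed gives $\hat a_{1,n}=\hat a_{n,1}=\hat B_n$ and $\hat a_{i,n+1-i}=2\hat B_n-\hat A_n$ for $2\leqslant i\leqslant n-1$ (I checked this expansion), associativity modulo $I^2$ is exactly the symmetric $2$-cocycle condition, Lazard's comparison lemma then forces $\hat a_{i,n+1-i}=\hat e_n\binom{n+1}{i}/d(n+1)$, and the weight-$n$ component of $\hat R_B$ comes out cyclic on $\hat e_n$ of order $\gcd_{2\leqslant i<i'\leqslant n-1}\bigl(\binom{n+1}{i}-\binom{n+1}{i'}\bigr)/d(n+1)$ (the boundary equation and one interior equation only serve to express $\hat B_n$ and $\hat A_n$ through $\hat e_n$, which is also what gives cyclicity). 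This formula reproduces the stated values in every case I tested ($n=5,\dots,10,14$).

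However, as a proof the text has two genuine gaps, and they are precisely the two hard parts of the theorem. First, the arithmetic evaluation of the gcd is not carried out: you state what ``one must show'' (gcd $=p$ iff $n=p^r$, $=2$ iff $n=2^k-2$, $=1$ otherwise) and point to Kummer and Lucas, but this number-theoretic lemma is the core content of the generator/$\rho_B$ assertion, and the cases you yourself flag as delicate (killing the gcd for all $n$ outside the two families, and isolating $n=2^k-2$) are exactly where the work lies. Second, the torsion assertion. Your odd-torsion argument does not go through as written: the relation $p\,e_n\equiv Q_n \pmod{I^2}$ is a statement about indecomposables, whereas torsion-freeness of $R_B[1/2]$ concerns the quotient of a polynomial ring by the \emph{full} associativity ideal, which has many relations in each weight beyond the single ``triangular'' one; the toy example $\mathbb{Z}[x,y]/(py-x^2)$ does not address this, and the embedding $R_B[1/2]\hookrightarrow\mathbb{Q}[e_1,e_2,e_3,e_4]$ you invoke presupposes both the unproved equality $R_B\otimes\mathbb{Q}=\mathbb{Q}[e_1,e_2,e_3,e_4]$ and the very injectivity (absence of odd torsion) being claimed. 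Finally, the bound of the $2$-torsion exponent by $2$ --- the last assertion of the theorem --- is explicitly left open in your own words. So what you have is a correct and well-organized reduction of the theorem to (a) a binomial-coefficient gcd computation and (b) an integral torsion analysis, not a proof of it.
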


\subsection{Krichever formal group} \label{s6}

Let $\chi_1, \chi_2, \chi_3, \ldots \in R$.
The \emph{Krichever formal group} \cite{BBKrich} over $R$ is a formal group of~the~form 
\begin{equation} \label{FK}
F_K(u,v)=u b(v) + v b(u) - \chi_1 u v +\frac{b(u) \beta(u) - b(v) \beta(v)}{u b(v) - v b(u)}\,u^2 v^2,
\end{equation}
where $b(u), \beta(u) \in R[[u]]$ and 
\begin{align} \label{Kb}
b(u) &= 1 + \chi_1 u + \sum_{k=1}^\infty (\chi_{2 i} u^{2i} + 2 \chi_{2 i + 1} u^{2i+1}), \\ \label{Kbe} \beta(u) &= \sum_{k = 0}^\infty \left((k+1) \chi_{2k+2} u^{2k} + (2k+3) \chi_{2k+3} u^{2k+1}\right).
\end{align}

For the \emph{universal Krichever formal group} we consider the variables $\chi_1, \chi_2, \chi_3, \ldots$ as generators of the ring of coefficients.

\begin{lem}[Corollary 5.5 in \cite{BBKrich}] \label{lk}
 Set $R_K = \mathbb{Z}[\chi_1, \chi_2, \chi_3, \ldots]/I_{ass}^K$, where $I_{ass}^K$ is the associativity ideal for the Krichever formal group \eqref{FK} generated by the relation \eqref{ass}. Then the formal group \eqref{FK} is~generating over $R_K$.
\end{lem}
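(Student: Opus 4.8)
The claim to prove is that $R_{F_K}=R_K$, where $R_{F_K}\subseteq R_K$ is the subring multiplicatively generated by the coefficients $a_{i,j}$ of the expansion $F_K(u,v)=u+v+\sum a_{i,j}u^iv^j$. One inclusion is free: each $a_{i,j}$ is a polynomial in the $\chi_k$, so $R_{F_K}\subseteq R_K$. Hence the plan is to prove the reverse inclusion by exhibiting every generator $\chi_n$ as a polynomial, with integer coefficients, in the $a_{i,j}$.

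First I would extract the part of each coefficient $a_{i,j}$ that is linear in the $\chi_k$, i.e. compute $\hat a_{i,j}$ in $\hat R_K = R_K/I^2$. The two summands of \eqref{FK} contribute in disjoint bidegrees: the term $u\,b(v)+v\,b(u)-\chi_1 uv$ has degree $\leqslant 1$ in each of $u,v$ (apart from the $\chi_1 uv$ term), so it determines $\hat a_{1,j}$ and $\hat a_{i,1}$, whereas the rational term carries the factor $u^2v^2$ and hence determines $\hat a_{i,j}$ for $i,j\geqslant 2$. Writing $N(u,v)=b(u)\beta(u)-b(v)\beta(v)$ and $D(u,v)=u\,b(v)-v\,b(u)$, one uses that modulo $I^2$ one has $b(u)\beta(u)\equiv\beta(u)$ and $D\equiv u-v$ up to a factor $\equiv 1$, so that $N/D\equiv\bigl(\beta(u)-\beta(v)\bigr)/(u-v)$. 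Reading off from \eqref{Kb} and \eqref{Kbe} the coefficient of $\chi_n$ in each $\hat a_{i,j}$ with $i+j=n+1$, I expect to find $\hat a_{1,n}=c_n\chi_n$ with $c_n=1$ for $n$ even and $c_n=2$ for $n$ odd, while for $2\leqslant i\leqslant n-1$ the interior coefficients are $\hat a_{i,n+1-i}=\tfrac n2\,\chi_n$ for $n$ even and $n\,\chi_n$ for $n$ odd.

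The crux is then the arithmetic observation that, for every $n\geqslant 2$, the integers occurring as coefficients of $\chi_n$ have greatest common divisor $1$: for $n$ even the corner coefficient is already $1$, and for $n$ odd the coefficients $2$ and $n$ are coprime. So, exactly as in the $e_n$ construction recalled before \eqref{en}, the Euclidean algorithm supplies integers $\lambda_i$ with $\sum_i\lambda_i\,\hat a_{i,n+1-i}=\chi_n$ in $\hat R_K$. Lifting this linear relation produces a genuine polynomial identity $\sum_i\lambda_i\,a_{i,n+1-i}=\chi_n+P_n$ in $\mathbb Z[\chi_1,\chi_2,\dots]$, where $P_n$ is the decomposable remainder. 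Grading by $\wt\chi_k=-2k$ (forced by $\wt a_{i,j}=-2(i+j-1)$ together with $\hat a_{1,n}=c_n\chi_n$) shows that any decomposable monomial of weight $-2n$ is a product of at least two factors of weight strictly above $-2n$, so $P_n$ involves only $\chi_1,\dots,\chi_{n-1}$. For $n=1$ one has directly $\chi_1=a_{1,1}$.

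Finally I would close by induction on $n$: assuming $\chi_1,\dots,\chi_{n-1}\in R_{F_K}$, the remainder $P_n$ lies in $R_{F_K}$, whence $\chi_n=\sum_i\lambda_i\,a_{i,n+1-i}-P_n\in R_{F_K}$. Since these are identities in the free ring $\mathbb Z[\chi_i]$ coming only from expanding the definition of $F_K$, they descend to $R_K$ regardless of the associativity ideal $I_{ass}^K$. Thus every $\chi_n$ lies in $R_{F_K}$, giving $R_{F_K}=R_K$, which is the assertion; the argument runs in parallel to the Buchstaber case of Lemma~\ref{lr}. I expect the main technical obstacle to be the bookkeeping of the second paragraph — cleanly isolating the linear-in-$\chi$ part of the rational term and pinning down the exact interior coefficients $\tfrac n2$ and $n$ — while the coprimality input that lets the whole argument run over $\mathbb Z$ rather than over $\mathbb Z[\tfrac12]$ is short but indispensable.
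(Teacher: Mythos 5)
The paper offers no proof of this statement to compare against: it is imported as Corollary 5.5 of \cite{BBKrich}, with only the citation given. Judged on its own, your proof is correct and complete, and it is the natural argument (parallel to the treatment of Lemma~\ref{lr} in \cite{BU}).

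Your computational claims all check out. Since the rational summand of \eqref{FK} carries the factor $u^2v^2$, the coefficients $a_{1,n}$ come entirely from $u\,b(v)+v\,b(u)-\chi_1 uv$, so by \eqref{Kb} one has \emph{exactly} (not merely modulo decomposables) $a_{1,1}=\chi_1$, $a_{1,n}=\chi_n$ for even $n$ and $a_{1,n}=2\chi_n$ for odd $n\geqslant 3$. For $i,j\geqslant 2$ with $i+j=n+1$, the linear-in-$\chi$ part of $a_{i,j}$ is the coefficient of $u^{n-2}$ in \eqref{Kbe}, i.e.\ $\tfrac n2\chi_n$ for $n$ even and $n\chi_n$ for $n$ odd, as you assert; your reduction of the rational term to $\bigl(\beta(u)-\beta(v)\bigr)/(u-v)$ is legitimate because the relevant numerators vanish at $u=v$ and the division by $u-v$ can be performed separately in each $\chi$-degree, so it preserves congruence modulo terms quadratic in the $\chi_k$. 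The arithmetic step is right: for even $n$ the corner coefficient is already $1$, and for odd $n\geqslant 3$ the coefficient $a_{2,n-1}$ exists and $\gcd(2,n)=1$. Homogeneity of each $a_{i,j}$ with respect to $\wt\chi_k=-2k$ (which follows by inspection of \eqref{FK}, \eqref{Kb}, \eqref{Kbe}, rather than being ``forced'' as you put it) confines the remainder $P_n$ to $\mathbb{Z}[\chi_1,\ldots,\chi_{n-1}]$, and your induction closes; phrasing the identities in $\mathbb{Z}[\chi_1,\chi_2,\ldots]$ before passing to the quotient by $I_{ass}^K$ is exactly the right move. One small imprecision: what you actually compute is the class of $a_{i,j}$ modulo the ideal of polynomials quadratic in the $\chi_k$, which is a priori \emph{larger} than the paper's ideal $I^2$ (generated by products of the $a_{i,j}$), so writing $\hat a_{i,j}\in R_K/I^2$ is loose; but since your argument only uses the exact polynomial identities $\sum_i\lambda_i a_{i,n+1-i}=\chi_n+P_n$ with $P_n$ quadratic in the $\chi_k$, nothing is affected (and once the lemma is proved, the two ideals coincide).
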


\begin{thm}[Theorem 5.6 in \cite{BBKrich}]
The exponential of a Krichever formal group is a Krichever function.
\end{thm}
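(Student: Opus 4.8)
The plan is to take $f$ to be the exponential of the Krichever formal group, so that $f(0)=0$, $f'(0)=1$ and the addition law $f(x+y)=F_K(f(x),f(y))$ holds identically, and then to extract from this functional equation the differential equation recalled in Section~\ref{s12}. Concretely, I want to reduce the two-variable law to a third-order ODE for $f$ by expanding in one of the variables.

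First I would identify $b$ with the invariant differential. Differentiating the addition law in its second argument and setting the second variable to $0$ gives $f'(x)=\frac{\partial F_K}{\partial v}(f(x),0)$. Computing $\frac{\partial F_K}{\partial v}(u,0)$ from \eqref{FK} is immediate: the fourth summand carries a factor $v^2$ and so contributes nothing at $v=0$, while the first three summands give $u\chi_1+b(u)-\chi_1 u=b(u)$, using $b(0)=1$ and $b'(0)=\chi_1$ from \eqref{Kb}. Hence
\[
f'(x)=b(f(x)),
\]
equivalently the logarithm $g=f^{-1}$ satisfies $g'(u)=1/b(u)$.

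Next I would exploit the relation between $b$ and $\beta$ already built into \eqref{Kb}--\eqref{Kbe}. A term-by-term comparison of coefficients shows that these two definitions are precisely equivalent to the single identity
\[
b'(u)=\chi_1+2u\,\beta(u).
\]
Differentiating $f'=b(f)$ and substituting this identity yields $f''=(\chi_1+2f\,\beta(f))f'$, so that the combination $b(u)\beta(u)$ appearing in the fourth summand of \eqref{FK}, evaluated along $u=f(x)$, becomes a function of the $2$-jet of $f$:
\[
b(f(x))\,\beta(f(x))=f'(x)\,\beta(f(x))=\frac{f''(x)-\chi_1 f'(x)}{2f(x)},
\]
which is regular at $x=0$ since the numerator vanishes there. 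Thus, along $u=f(x)$, all the data entering $F_K$ are expressed through $f,f',f''$ alone.

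Finally I would substitute $u=f(x)$, $v=f(y)$ into $f(x+y)=F_K(f(x),f(y))$ and use the two relations above. The denominator $ub(v)-vb(u)$ becomes the Wronskian $f(x)f'(y)-f(y)f'(x)$, which equals $f(x)\neq 0$ at $y=0$, so the resulting functional equation is regular there and may be expanded in powers of $y$. Since the $x$-dependent quantities entering the right-hand side are only $f(x)$, $f'(x)$ and $b(f(x))\beta(f(x))$ (no $y$-differentiation touches them), the coefficients of $y^0,y^1,y^2$ merely reproduce the relations already established, while the coefficient of $y^3$ equates $f'''(x)/6$ to a rational function of $f,f',f''$; clearing the powers of $f$ in the denominator and rearranging should produce an equation of the form
\[
f f'''-3f'f''=6q_1 (f')^2+12q_2 f f'+12 q_3 f^2,
\]
with $q_1,q_2,q_3$ explicit polynomials in $\chi_1,\chi_2,\chi_3$. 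By the characterization in Section~\ref{s12} this is exactly the condition that $f$ be a Krichever function. The main obstacle will be this last step: carrying out the order-$y^3$ expansion of the rational fourth summand of \eqref{FK} and verifying that the third-order relation it produces genuinely has Krichever form, i.e.\ that the terms organize into the specific combination $6q_1(f')^2+12q_2 ff'+12q_3 f^2$ rather than a more general cubic expression in the $2$-jet $(f,f',f'')$.
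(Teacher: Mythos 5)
The paper you are working from does not prove this statement at all: it is imported verbatim as Theorem 5.6 of \cite{BBKrich}, so there is no in-paper proof to compare against, and your attempt has to stand on its own. Your preparatory steps do stand: $\tfrac{\partial F_K}{\partial v}(u,0)=b(u)$ and hence $f'=b(f)$ is correct; the coefficient identities \eqref{Kb}--\eqref{Kbe} are indeed equivalent to the single relation $b'(u)=\chi_1+2u\beta(u)$; and therefore $b(f)\beta(f)=\bigl(f''-\chi_1 f'\bigr)/(2f)$ along $u=f(x)$. Substituting all of this into the addition law turns \eqref{FK} into
\begin{equation*}
f(x+y)=f(x)f'(y)+f(y)f'(x)-\tfrac{\chi_1}{2}f(x)f(y)
+f(x)f(y)\,\frac{f(y)f''(x)-f(x)f''(y)}{2\bigl(f(x)f'(y)-f(y)f'(x)\bigr)},
\end{equation*}
which depends only on the $2$-jets of $f$ at $x$ and at $y$, exactly as you say.

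The problem is that what you call ``the main obstacle'' is precisely the content of the theorem, and you leave it as a ``should produce''; as submitted this is a plan plus correct lemmas, not a proof. For the record, the deferred computation does close, and cleanly. First clear the denominator (multiply by $2\bigl(f(x)f'(y)-f(y)f'(x)\bigr)$; this also avoids inverting $f(x)$, which is not invertible as a formal power series, so your ``regular at $y=0$'' remark needs this rephrasing anyway) and compare coefficients of powers of $y$. The coefficients of $y^0,y^1,y^2$ give identities that hold automatically, using only $f''(0)=\chi_1$. The coefficient of $y^3$, after multiplying by $3$ and using $f'''(0)=\chi_1^2+2\chi_2$ and $f''''(0)=\chi_1^3+8\chi_1\chi_2+12\chi_3$ (both consequences of $f'=b(f)$), becomes
\begin{equation*}
f f'''-3f'f''=-3\chi_1\,(f')^2+(\chi_1^2-4\chi_2)\,ff'-6\chi_3\,f^2,
\end{equation*}
i.e.\ the equation of Section~\ref{s12} with $q_1=-\chi_1/2$, $q_2=(\chi_1^2-4\chi_2)/12$, $q_3=-\chi_3/2$. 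The two facts that are not visible a priori, and that you were right to flag as the crux, are that the $ff''$ terms cancel between the two sides and that $ff'''$ and $f'f''$ appear in the exact ratio $1:-3$; neither is an accident one may simply assume. With that verification supplied, your argument proves the theorem in the sense of the differential-equation characterization of a Krichever function given in Section~\ref{s12}; identifying the solutions with the classical form \eqref{fKr} and its degenerations is a separate, cited matter (\cite{Krichever-90}, \cite{B19}).
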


It has been noted in \cite{Bak} that the rings of coefficients of universal Buchstaber formal group and universal Krichever formal group are isomorphic, that is $R_B = R_K$. In the next Section we give an independant proof of this result by constructing this isomorphism explicitly. It is
based on the explicit form~\eqref{FK} of Krichever formal group and Theorem~\ref{tBU}.

\section{Coefficient rings of universal Buchstaber~and~Krichever~formal~groups} \label{s2}

\begin{lem} \label{L1}
 $R_B \supset R_K$.
\end{lem}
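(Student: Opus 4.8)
The plan is to realize the universal Buchstaber formal group over $R_B$ as a Krichever formal group, thereby producing an injection $R_K \hookrightarrow R_B$. Over $R_B \otimes \mathbb{Q}$ both formal groups are determined by their common exponential, the Krichever function, so there $F_B$ already has Krichever form and its parameters $\chi_k \in R_B \otimes \mathbb{Q}$ are uniquely determined. Everything then reduces to the integrality statement $\chi_k \in R_B$ for all $k$: granting it, the series $b,\beta$ defined by \eqref{Kb} and \eqref{Kbe} satisfy $F_K = F_B$ already over $R_B$, so $F_B$ is a Krichever formal group over $R_B$, and the universal property of $R_K$ yields a ring homomorphism $\phi \colon R_K \to R_B$ with $\phi(\chi_k)=\chi_k$.

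First I would record the precise dictionary between the two forms. Comparing \eqref{FB} with \eqref{FK}, and using that $F_B$ is insensitive to $A_2$ and $B_1$, one gets $b(u)=B(u)$ and $A(u)=b(u)\bigl(b(u)-\chi_1 u - u^2\beta(u)\bigr)$, equivalently $\chi_1=A_1$, $\chi_{2i}=B_{2i}$, $2\chi_{2i+1}=B_{2i+1}$. Read the other way, I expand \eqref{FK} and isolate, for each pair with $i+j=k+1$, the part of $a_{i,j}$ that is linear in $\chi_k$: for even $k$ the coefficient of $\chi_k$ equals $1$ in $a_{1,k}$ and $k/2$ in every $a_{i,j}$ with $i,j\geqslant 2$; for odd $k$ it equals $2$ in $a_{1,k}$ and $k$ in every such $a_{i,j}$.

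The integrality is the heart of the lemma, and it is precisely the Euclidean-algorithm mechanism recalled before \eqref{en}. In each case the greatest common divisor of these coefficients is $1$, since $\gcd(1,k/2)=1$ for even $k$ and $\gcd(2,k)=1$ for odd $k$. Bezout therefore writes $\chi_k$ as a $\mathbb{Z}$-linear combination of the $a_{i,j}$ with $i+j=k+1$, corrected by a polynomial in the $\chi_m$ with $m<k$; induction on $k$ then gives $\chi_k\in R_B$. I expect this integrality --- equivalently, the absence of any prime obstruction, guaranteed here by the two greatest common divisors being $1$ --- to be the one genuinely delicate point, the nonlinear corrections amounting to routine bookkeeping.

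Finally I would check that $\phi$ is injective. Rationally $\phi\otimes\mathbb{Q}$ is an isomorphism, because over $\mathbb{Q}$ both rings are the coefficient ring of one and the same formal group, the one with the Krichever function as exponential; hence $\ker\phi$ is $2$-torsion by Theorem~\ref{tBU}. To remove even this I would produce an explicit left inverse: the dictionary above is integral in reverse, as $B_{2i}=\chi_{2i}$, $B_{2i+1}=2\chi_{2i+1}$ and $A=b\,(b-\chi_1 u - u^2\beta)$ have integer coefficients in the $\chi_m$, so sending $A_k,B_k$ to these polynomials defines a homomorphism $\psi\colon R_B\to R_K$, well defined because the resulting Buchstaber group is the associative $F_K$. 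Tracing the substitutions gives $\psi\circ\phi=\mathrm{id}_{R_K}$, whence $\phi$ is injective and $R_B\supset R_K$.
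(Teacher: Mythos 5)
Your proposal inverts the direction of the paper's argument and, in doing so, runs into a genuine gap. In the paper, Lemma~\ref{L1} is the easy, purely formal direction: putting \eqref{FK} over the common denominator $u\,b(v) - v\,b(u)$ rewrites \emph{any} Krichever formal group as a Buchstaber formal group \eqref{FKB}, with $A_K(u) = b(u)^2 - \chi_1 u\, b(u) - u^2 b(u)\beta(u) - \chi_2 u^2$ and $B_K(u) = b(u) - \chi_1 u$, so that each $A_k$, $B_k$ becomes an explicit integer polynomial \eqref{cBK} in the $\chi_j$ and the associativity ideals correspond under this substitution. The resulting classifying homomorphism $R_B \to R_K$ is surjective because $F_K$ is generating over $R_K$ (Lemma~\ref{lk}), and this epimorphism is what the containment $R_B \supset R_K$ means here (the Krichever groups form a subclass of the Buchstaber groups, and universal coefficient rings behave contravariantly). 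Note that this map is exactly the homomorphism $\psi$ you construct in your \emph{final} paragraph as an afterthought; the paper's whole proof is that one step, with no associativity computations, no Bezout, and no integrality difficulties.

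What you place at the center --- realizing the universal Buchstaber group over $R_B$ as a Krichever group, i.e.\ producing $\phi\colon R_K \to R_B$, $\chi_k \mapsto C_k \in R_B$ --- is not this lemma but the paper's main Theorem, and your proof of it fails at a specific step. The gcd/Bezout mechanism (which is correct as far as it goes) produces elements $C_k \in R_B$ whose images in $R_B \otimes \mathbb{Q}$ are the rational parameters $\chi_k$; but from this you conclude that ``the series $b,\beta$ satisfy $F_K = F_B$ already over $R_B$.'' That inference is invalid: by Theorem~\ref{tBU} the ring $R_B$ has $2$-torsion, so $R_B \to R_B \otimes \mathbb{Q}$ is not injective, and equality of the two formal groups over $R_B \otimes \mathbb{Q}$ only gives the Krichever relations \eqref{cB} up to elements killed by $2$ --- in the paper's notation, it gives $2S_k = 0$, not $S_k = 0$. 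Eliminating these order-two discrepancies is the actual difficulty: the paper has to invoke a second, independent associativity relation (the coefficient \eqref{e14} of $v^2w$, yielding an expression for $3A_N$ and hence $3S_N = 0$) to conclude $S_k = 0$. So the part you dismiss as ``routine bookkeeping'' is precisely where the content lies, while the gcd computation is the routine part. A smaller slip: you bound $\ker\phi$ by ``$2$-torsion by Theorem~\ref{tBU}'', but $\ker\phi$ lies in $R_K$, whose torsion is not known at this stage (it is a corollary of the theorem, not an input); and until $\phi$ is known to be well defined, the identity $\psi\circ\phi = \mathrm{id}_{R_K}$ cannot be traced.
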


\begin{proof}
By \eqref{FK} we have
\begin{equation} \label{FKB}
F_K(u,v)= \frac{u^2 \left( b(v)^2 - \chi_1 v b(v) - v^2 b(v) \beta(v)\right) - v^2 \left(b(u)^2 - \chi_1 u b(u) - u^2 b(u) \beta(u)\right)}{u b(v) - v b(u)}.
\end{equation}
Thus 
\[
F_K(u,v)= \frac{u^2 A_K(v) - v^2 A_K(u)}{u B_K(v) - v B_K(u)}
\]
is a Buchstaber formal group \eqref{FB}, where
\begin{align} \label{AB}
A_K(u) &= b(u)^2 - \chi_1 u b(u) - u^2 b(u) \beta(u) - \chi_2 u^2,\\
B_K(u) &= b(u) - \chi_1 u. \nonumber
\end{align}
Using the expressions  \eqref{Kb} and \eqref{Kbe}, we obtain from \eqref{AB} the relations for the coefficients of the series $A_K(u)$ and $B_K(u)$, where $n \geqslant 1$:
\begin{align}
 B_{2n} &= \chi_{2n}, \qquad
 B_{2n+1} = 2 \chi_{2n+1}, \qquad
 A_{1} = \chi_1, \label{cBK} \\
 A_{2n+1} &= (3-2n) \left(\chi_{2n+1} + \sum_{k=1}^{n} \chi_k \chi_{2n-k+1} \right)+ (n-2) \chi_1 \chi_{2n}, \nonumber\\
A_{4n} &= (1-n) \left( 2 \chi_{4n} + \sum_{k=1}^{2n-1} \chi_{2k} \chi_{4n-2k} + 4 \sum_{k=1}^{2n} \chi_{2k-1} \chi_{4n-2k+1} \right) + (4 n - 5) \chi_1 \chi_{4n-1}, \nonumber\\
A_{4n+2} &= (1-2n) \left( \chi_{4n+2}  + \sum_{k=1}^n \chi_{2k} \chi_{4n-2k+2} + 2 \sum_{k=1}^{2n+1} \chi_{2k-1} \chi_{4n-2k+3} \right) + (4n-3) \chi_1 \chi_{4n+1}, \nonumber
\end{align}
expressing all the coefficients $A_k$ and $B_k$ in $\chi_j$ for a Buchstaber formal group \eqref{FB} with~\eqref{AB}. From \eqref{FKB} we see that the associativity ideals $I_{ass}^B$ and $I_{ass}^K$ under the relations \eqref{cBK} coincide.
\end{proof}

\begin{thm}
$R_B = R_K$, i.e. the coefficient rings of the universal Buchstaber formal group and of the universal Krichever formal group are isomorphic over $\mathbb{Z}$.
\end{thm}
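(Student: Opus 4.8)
The plan is to promote the substitution \eqref{cBK} to an isomorphism over $\mathbb{Z}$. By Lemma~\ref{L1} this substitution exhibits $F_K$ as a Buchstaber group, so it defines a graded ring homomorphism $\psi\colon R_B\to R_K$ sending $A_k,B_k$ to the right-hand sides of \eqref{cBK}. Since the universal Buchstaber and Krichever groups share the same (Krichever) exponential, their rational coefficient rings coincide and $\psi\otimes\mathbb{Q}$ is an isomorphism; hence $\ker\psi$ is a torsion ideal, annihilated by $2$ by Theorem~\ref{tBU}. Everything thus reduces to showing $\ker\psi=0$ over $\mathbb{Z}$, and to do this I would construct an integral inverse.

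The key step is to invert \eqref{cBK} recursively over $\mathbb{Z}$. The even coefficients give $\chi_{2n}=B_{2n}$ and $\chi_1=A_1$ outright. For an odd index I would use the two relations carrying $\chi_{2n+1}$, namely $B_{2n+1}=2\chi_{2n+1}$ and $A_{2n+1}=(3-2n)\bigl(\chi_{2n+1}+\text{(products of }\chi_j\text{ of lower index)}\bigr)+(n-2)\chi_1\chi_{2n}$. Because $3-2n$ is odd one has $\gcd(2,3-2n)=1$, so a single B\'ezout combination of these two relations expresses $\chi_{2n+1}$ as an \emph{integer} polynomial in the $A_k,B_k$ and in $\chi_j$ of lower index, which are already integral by induction. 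This yields a candidate inverse $\theta\colon R_K\to R_B$ with $\psi\circ\theta=\mathrm{id}_{R_K}$ by construction, so in particular $\psi$ is onto. The section $\theta$ also makes the superfluous Buchstaber generators explicit: for instance $\psi(B_3-2A_3)=0$, so $B_3\equiv 2A_3$ modulo $\ker\psi$, which matches the two presentations degree by degree.

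What remains, and what I expect to be the main obstacle, is the interaction with the order-$2$ torsion of $R_B$ from Theorem~\ref{tBU}: the integral inverse is a priori well defined only modulo this torsion, so $\theta\circ\psi$ need not be the identity on the nose and injectivity of $\psi$ is not yet clear. To settle it I would work in each weight. There $(R_B)_d\to(R_K)_d$ is a surjection of finitely generated abelian groups of equal rank, so its kernel is finite and vanishes iff the torsion subgroups of $(R_B)_d$ and $(R_K)_d$ have equal order. Now the elements $e_n$ of \eqref{en} are given by the same formula in the structure constants $a_{i,j}$ for both groups and $\psi$ is the classifying homomorphism, so $\psi(e_n)=e_n$ and hence $\hat\psi(\hat e_n)=\hat e_n$ on indecomposables $\hat R=R/I^2$; surjectivity of $\psi$ then forces $\rho_K(n)\mid\rho_B(n)$. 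The crux is the reverse divisibility: using the explicit form \eqref{FK} one must verify that for $n=2^k-2$ the class $\hat e_n$ remains of order exactly $2$ in $R_K$, so that $\rho_K=\rho_B$ by Theorem~\ref{tBU}. Combined with Corollary~\ref{TBg} this makes the torsion subgroups agree in every degree, forces $\ker\psi=0$, and gives $R_B=R_K$ with $\psi$ as the explicit isomorphism.
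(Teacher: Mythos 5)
Your surjectivity step is correct and is in fact close to one half of the paper's argument, but both of the remaining steps contain genuine gaps. First, the B\'ezout recursion (using $\gcd(2,3-2n)=1$ on the two relations of \eqref{cBK} carrying $\chi_{2n+1}$) proves that every $\chi_j$ lies in the image of $\psi$, hence $\psi$ is onto; but it does \emph{not} produce a ring homomorphism $\theta\colon R_K\to R_B$. To define $\theta$ on $R_K=\mathbb{Z}[\chi_1,\chi_2,\ldots]/I_{ass}^K$ you must check that the elements $C_j\in R_B$ given by your recursion annihilate $I_{ass}^K$, i.e.\ that the Krichever-form series with parameters $C_j$ coincides with the universal Buchstaber group $F_B$ over $R_B$ --- equivalently, that the relations \eqref{cBK} hold \emph{in} $R_B$ with $\chi_j$ replaced by $C_j$. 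Nothing in the recursion gives this: inside $R_B$ the statements $B_{2n+1}=2C_{2n+1}$ and the formula for $A_{2n+1}$ are not definitions but facts that must be derived from associativity, and that is precisely the hard content of the theorem. Second, your fallback route to injectivity is unsound even in principle: equality of the functions $\rho_B$ and $\rho_K$ cannot force $\ker\psi=0$, because $\rho$ only sees the indecomposable quotient $\hat R=R/I^2$, while the potential kernel consists of (possibly decomposable) $2$-torsion. A toy example: the projection $\mathbb{Z}[x]/(2x^2)\to\mathbb{Z}[x]/(x^2)$ is a graded surjection of rings generated by one element, it is an isomorphism over $\mathbb{Q}$, generators map to generators, and both sides have the same $\rho$; yet its kernel is the nonzero decomposable torsion ideal generated by $x^2$. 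So even if you verified that $\rho_K(2^k-2)=2$ (a step you explicitly leave open), the degree-by-degree torsion count would not follow from it. (Your assertion that $\psi\otimes\mathbb{Q}$ is an isomorphism because both groups share the Krichever exponential is also stated without proof, though it is true and is the least of the problems.)

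The paper closes the gap by a different placement of the B\'ezout trick: it is applied to \emph{two associativity constraints inside $R_B$}, not to the defining relations \eqref{cBK}. The coefficient of $vw$ in the associativity series, expression \eqref{e12}, yields $B'(u)=2M(u)$ with $M(u)$ integral over $R_B$; this is what \emph{defines} the elements $C_j$ (via \eqref{cbm}) and shows that the relations \eqref{cBK} hold in $R_B$ up to $2$-torsion, $2S_k=0$. Then the coefficient of $v^2w$, expression \eqref{e14}, evaluated at the minimal weight $N$ with $S_N\neq 0$, gives the same relation up to $3$-torsion, $3S_N=0$; since $\gcd(2,3)=1$, all $S_k=0$. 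This exhibits $F_B$ over $R_B$ as a Krichever formal group, which is exactly what makes your $\theta\colon R_K\to R_B$ well defined and inverse to $\psi$, with no torsion counting needed. The idea missing from your proposal is this second, independent associativity relation, which kills the mod-$2$ ambiguity integrally inside $R_B$.
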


\begin{proof}
For the universal Buchstaber formal group \eqref{FB}, we denote by $\varepsilon_n$, $n = 1,2,\ldots$, the multiplicative generators $h_B(e_n)$ of the ring $R_B$, see Theorem \ref{TBg}. Denote by $\mathcal{P}$ the graded ring of polynomials in $\varepsilon_n$, $n = 1,2,\ldots$, with integer coefficients, and by $\mathcal{P}_n$ it's component of weight $n$. We have $A_n \in \mathcal{P}_n$, $B_n \in \mathcal{P}_n$.

For the universal Buchstaber formal group \eqref{FB}, the associativity ideal $I_{ass}^B$ is generated by the coefficients of the series (see \eqref{ass}):
\[
N_B(u,v,w) = F_B(F_B(u,v),w) - F_B(u,F_B(v,w)).
\]
The coefficient at $vw$ of the series $N_B(u,v,w)$ in $v, w$ as a function in $u$ is equal to
\begin{equation} \label{e12}
 B'(u) (B(u) + A_1 u) - 2  A_1 B(u) + 2 B_2 u + 2 {A(u)  - B(u)^2\over u}.
\end{equation}
Note that for $B(u) + A_1 u = 1 + A_1 u + \ldots$ the series $1/B(u)$ is a series in $u$ with coefficients in $\mathcal{P}$, and
\[
 {A(u)  - B(u)^2\over u} = A_1 - 2 B_2 u + (A_3 - 2 B_3) u^2 + (A_4 - 2 B_4 - B_2^2) u^3 + (A_5 - 2 B_5 - 2 B_2 B_3) u^4 + \ldots
\]
is a series in $u$ with coefficients in $\mathcal{P}$, therefore the expression
\[
 M(u) = {1 \over B(u) + A_1 u} \left(A_1 B(u) - B_2 u - {A(u)  - B(u)^2\over u}\right) = B_2 u + (2 B_3-A_3) u^2 + \ldots
\]
is a series in $u$ with coefficients in $\mathcal{P}$. Taking in account the grading, we denote $M(u) = \sum_{k \geqslant 1} M_{k+1} u^k$.
As the expression \eqref{e12} lies in the ideal $I_{ass}^B$, we~obtain the relation
\[
 B'(u) = 2 M(u).
\]
We have
\[
B'(u) = \sum_{k=1}^\infty k B_k u^{k-1} 
\]
Let us introduce the coefficients $C_n \in \mathcal{P}_n$ by the following relations for $n \geqslant 1$:
\begin{align}
 C_1 &= A_1, &
 C_{2n} &= B_{2n}, &
 C_{2n+1} &= M_{2n+1} - n B_{2n+1}. \label{cbm}
\end{align}
\end{proof}
Therefore for $n \geqslant 1$ we have
\begin{align*}
B_{2n} &= C_{2n}, & B_{2n+1} &= 2 C_{2n+1}, & A_1 &= C_1. 
\end{align*}
Comparing with \eqref{cBK}, we introduce the coefficients $S_k \in \mathcal{P}_k$ with $k \geqslant 3$ by the relations for $n \geqslant 1$:
\begin{align}
 S_{2n+1} &= A_{2n+1} - (3-2n) \left(C_{2n+1} + \sum_{k=1}^{n} C_k C_{2n-k+1} \right) - (n-2) C_1 C_{2n}, \nonumber\\
S_{4n} &= A_{4n} - (1-n) \left( 2 C_{4n} + \sum_{k=1}^{2n-1} C_{2k} C_{4n-2k} + 4 \sum_{k=1}^{2n} C_{2k-1} C_{4n-2k+1} \right) - (4 n - 5) C_1 C_{4n-1}, \nonumber\\
S_{4n+2} &= A_{4n+2} - (1-2n) \left( C_{4n+2}  + \sum_{k=1}^n C_{2k} C_{4n-2k+2} + 2 \sum_{k=1}^{2n+1} C_{2k-1} C_{4n-2k+3} \right) - (4n-3) C_1 C_{4n+1}. \nonumber
\end{align}
As the expression \eqref{e12} lies in the ideal $I_{ass}^B$, for all  $k \geqslant 3$ we obtain 
\[
 2 S_k = 0.
\]
Now let us prove that all $S_k = 0$ in $R_B$. Let $N$ be such that $S_N \ne 0$, $S_n = 0$ for $n<N$. By definition of $C_{2n+1}$ in \eqref{cbm}, we have $N \ne 2n+1$, thus $S_3 = 0$ and $A_3 =  C_{3}$.

The coefficient at $v^2 w$ of the series $N_B(u,v,w)$ in $v, w$ as a function in $u$ is equal to
\begin{multline} \label{e14}
{1 \over 2} B''(u) (A_1 u + B(u))^2
+ B'(u) \left(B(u) A_1 - u B_2\right) + B(u) \left(5 B_2 - A_1^2 \right) + \\+ 3 u \left(A_1 B_2 - A_3 + B_3\right)
- {(B'(u) u - 3 B(u)) (-B(u)^2 + A(u)) \over u^2} + A_1 {-4 B(u)^2 + A(u) \over u}.
\end{multline}
Here ${1 \over 2} B''(u) = \sum_{k=2}^{\infty} (k (k-1)/2) B_k u^{k-2}$ is a series in $u$ with coefficients in $\mathcal{P}$. The expression \eqref{e14} is a series in $u$ with coefficients in $\mathcal{P}$, and it's coefficient at $u^{N-2}$ gives an expression for $3 A_N$ in $C_1, C_2 \ldots, C_N$ for $N>3$. By the proof of Lemma \ref{L1} the corresponding relations hold for $3 S_N = 0$, and thus we obtain $S_N = 0$, which proves the~Theorem.

\begin{cor}
 For the universal Buchstaber formal group \eqref{FB} we have $B_{2n+1}\in 2 R_B$.
\end{cor}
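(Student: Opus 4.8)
The plan is to read the statement off directly from the identities already established while proving the theorem. Recall that there we introduced coefficients $C_n \in \mathcal{P}_n$, and, using that the expression \eqref{e12} lies in the associativity ideal $I_{ass}^B$, we deduced the relation $B'(u) = 2 M(u)$ in $R_B$. Writing $B'(u) = \sum_{k \geqslant 2} k B_k u^{k-1}$ (the term $k=1$ vanishes since $B_1 = 0$) and $2 M(u) = \sum_{k \geqslant 2} 2 M_k u^{k-1}$, I would compare the coefficients at $u^{2n}$ to obtain $(2n+1) B_{2n+1} = 2 M_{2n+1}$ for every $n \geqslant 1$.

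The next step is to substitute the definition $C_{2n+1} = M_{2n+1} - n B_{2n+1}$ from \eqref{cbm}. This gives $(2n+1) B_{2n+1} = 2 C_{2n+1} + 2n B_{2n+1}$, hence $B_{2n+1} = 2 C_{2n+1}$ in $R_B$ --- precisely the identity recorded immediately after the proof of the theorem. Since each $C_{2n+1}$ is a polynomial in the generators $\varepsilon_n$ and so maps to an element of $R_B$, we conclude $B_{2n+1} \in 2 R_B$, which is the assertion of the corollary.

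There is essentially no obstacle beyond this bookkeeping: the whole content sits in the relation $B'(u) = 2 M(u)$, the first consequence extracted from associativity in the proof of the theorem. The single point worth emphasizing is that $B_{2n+1} = 2 C_{2n+1}$ is an identity in $R_B$, where \eqref{e12} vanishes, and not merely in the free polynomial ring $\mathcal{P}$; this is what turns the divisibility into a genuine statement about the coefficient ring rather than a formal triviality.
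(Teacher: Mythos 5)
Your proposal is correct and follows essentially the same route as the paper: the paper derives $B'(u)=2M(u)$ from the vanishing of \eqref{e12} in $R_B$, defines $C_{2n+1}=M_{2n+1}-nB_{2n+1}$ as in \eqref{cbm}, and immediately records the identity $B_{2n+1}=2C_{2n+1}$, which is exactly your coefficient comparison at $u^{2n}$. Your closing remark --- that the identity holds in $R_B$ (where the associativity relation kills \eqref{e12}) rather than in the free ring $\mathcal{P}$ --- is precisely the right point to emphasize.
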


\begin{cor}
 For the universal Buchstaber formal group \eqref{FB} we have $B'(u) \in 2 R_B[[u]]$.
\end{cor}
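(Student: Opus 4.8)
The plan is to prove this coefficientwise, reducing it to the previous Corollary. Since $B_1 = 0$, we have $B'(u) = \sum_{k \geqslant 1} k B_k u^{k-1}$, so it suffices to check that each coefficient $k B_k$ lies in $2 R_B$. I would split on the parity of $k$. If $k$ is even, then $k B_k = 2\,(k/2)\,B_k \in 2 R_B$ trivially. If $k = 2n+1$ is odd, then the preceding Corollary gives $B_{2n+1} \in 2 R_B$, whence $k B_k \in 2 R_B$ as well. Both cases together yield $B'(u) \in 2 R_B[[u]]$.

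An even shorter route is to quote directly the relation $B'(u) = 2 M(u)$ established in the proof of the Theorem. There $M(u) = \sum_{k \geqslant 1} M_{k+1} u^k$ is exhibited as a power series whose coefficients are polynomial expressions in the $A_i, B_j$, hence elements of $R_B$; indeed $M(u) = (B(u) + A_1 u)^{-1}\bigl(A_1 B(u) - B_2 u - (A(u) - B(u)^2)/u\bigr)$, and the factor $B(u) + A_1 u = 1 + A_1 u + \cdots$ is invertible in $R_B[[u]]$. The identity $B'(u) = 2 M(u)$ thus holds in $R_B[[u]]$ and displays every coefficient of $B'(u)$ as twice an element of $R_B$.

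I expect no genuine obstacle: the statement is essentially a repackaging of the divisibility already encoded in $B'(u) = 2 M(u)$. The only point deserving a word of care is that $M(u)$ really has coefficients in $R_B$ (not merely in the polynomial ring $\mathcal{P}$), which is immediate from the formula above. I would therefore present the parity argument of the first paragraph as the proof, since it is self-contained and leans only on the previous Corollary.
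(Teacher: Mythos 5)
Your proposal is correct and takes essentially the same route as the paper: the paper states this corollary without separate proof, as an immediate consequence of the preceding corollary $B_{2n+1} \in 2 R_B$ and of the relation $B'(u) = 2 M(u)$ (with $M(u)$ a series over $R_B$) established in the proof of the theorem. Both of your arguments --- the parity split on the coefficients $k B_k$ and the direct quotation of $B'(u) = 2M(u)$ --- are valid instances of exactly that reasoning.
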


\begin{cor}
 For the universal Buchstaber formal group \eqref{FB} one can find a series ${B'(u) \over 2} \in R_B[[u]]$ such that $2 {B'(u) \over 2} = B'(u)$ and 
 the relation holds
\begin{align} \label{Brel}
A(u) &= (B(u) + A_1 u) \left(B(u) - u {B'(u) \over 2}\right) - B_2 u^2.
\end{align}
\end{cor}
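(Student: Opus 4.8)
The plan is to use the series $M(u)$ already constructed in the proof of the Theorem as the explicit witness for ${B'(u) \over 2}$. Recall that vanishing of the coefficient \eqref{e12} at $vw$ in $R_B$ produced the identity $B'(u) = 2 M(u)$ in $R_B[[u]]$, where
\[
M(u) = {1 \over B(u) + A_1 u}\left(A_1 B(u) - B_2 u - {A(u) - B(u)^2 \over u}\right)
\]
is a series with coefficients in $\mathcal{P}$, hence in $R_B$. First I would simply set ${B'(u) \over 2} := M(u) \in R_B[[u]]$; then $2 {B'(u) \over 2} = 2 M(u) = B'(u)$ by construction, which discharges the first assertion of the corollary without any appeal to an abstract division by $2$.

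The relation \eqref{Brel} is then a purely formal rearrangement of the defining equation of $M(u)$. I would clear the denominator $B(u)+A_1u$ and multiply through by $u$ to obtain
\[
(B(u) + A_1 u)\, u\, M(u) = A_1 u B(u) - B_2 u^2 - A(u) + B(u)^2,
\]
solve this identity for $A(u)$, and substitute $M(u) = {B'(u) \over 2}$. Grouping $B(u)^2 + A_1 u B(u) = (B(u) + A_1 u) B(u)$ against the factor $(B(u) + A_1 u)\, u\, {B'(u) \over 2}$ yields exactly
\[
A(u) = (B(u) + A_1 u)\left(B(u) - u {B'(u) \over 2}\right) - B_2 u^2,
\]
which is \eqref{Brel}. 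No new computation beyond what already appears in the proof of the Theorem is needed.

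The only genuine subtlety is that, by Theorem \ref{tBU}, the ring $R_B$ carries $2$-torsion, so the symbol ${B'(u) \over 2}$ must not be read as a canonical quotient: the corollary is an \emph{existence} statement, and its whole content is that some witness for the halving lives inside $R_B[[u]]$. The series $M(u)$ supplies precisely such a witness, so once it is recognised as the correct candidate the remaining steps are mechanical. The point to handle with care is to use $B'(u) = 2 M(u)$ as an honest equality in $R_B[[u]]$ — not merely modulo decomposables or modulo $2$ — since the verification of \eqref{Brel} relies on it as a genuine identity; this is exactly what the coefficient computation \eqref{e12} in the proof of the Theorem guarantees.
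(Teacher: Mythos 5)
Your proposal is correct and is exactly the argument the paper intends: the corollary is stated as a consequence of the Theorem's proof, where the series $M(u)$ with coefficients in $\mathcal{P}$ and the identity $B'(u) = 2M(u)$ (from \eqref{e12} lying in $I_{ass}^B$) are established, and taking ${B'(u) \over 2} := M(u)$ turns \eqref{Brel} into a formal rearrangement of the defining equation of $M(u)$. Your closing remark correctly isolates the one subtlety — that in a ring with $2$-torsion the halving is an existence claim witnessed by $M(u)$, not a canonical division — so nothing is missing.
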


\begin{cor}
For any Buchstaber formal group \eqref{FB} over a ring $R$ there exist $C_k \in R$ for $k = 1,2,3,\ldots$ such that the relations hold
\begin{align}
 B_{2n} &= C_{2n}, \qquad
 B_{2n+1} = 2 C_{2n+1}, \qquad
 A_{1} = C_1, \nonumber \\
 A_{2n+1} &= (3-2n) \left(C_{2n+1} + \sum_{k=1}^{n} C_k C_{2n-k+1} \right)+ (n-2) C_1 C_{2n}, \label{cB} \\
A_{4n} &= (1-n) \left( 2 C_{4n} + \sum_{k=1}^{2n-1} C_{2k} C_{4n-2k} + 4 \sum_{k=1}^{2n} C_{2k-1} C_{4n-2k+1} \right) + (4 n - 5) C_1 C_{4n-1}, \nonumber\\
A_{4n+2} &= (1-2n) \left( C_{4n+2}  + \sum_{k=1}^n C_{2k} C_{4n-2k+2} + 2 \sum_{k=1}^{2n+1} C_{2k-1} C_{4n-2k+3} \right) + (4n-3) C_1 C_{4n+1}. \nonumber
\end{align}
\end{cor}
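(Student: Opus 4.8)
The plan is to reduce the statement to the universal case and then transport it along the classifying homomorphism. By Lemma~\ref{lr} the ring $R_B$ is the ring of coefficients of the universal Buchstaber formal group of the form~\eqref{FB}, so by the universal property of a formal group of a given form any Buchstaber formal group~\eqref{FB} over a ring $R$ is obtained from the universal one through a unique ring homomorphism $h\colon R_B \to R$ carrying the universal coefficients $A_k, B_k \in R_B$ to the given coefficients $A_k, B_k \in R$. Since \eqref{cB} is a system of polynomial identities with integer coefficients relating the symbols $A_k$, $B_k$ and the auxiliary symbols $C_k$, it suffices to produce elements $C_k \in R_B$ for which \eqref{cB} holds already over $R_B$; setting $C_k := h(C_k)$ and applying the ring homomorphism $h$ then yields \eqref{cB} over $R$.

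So the real work is to exhibit the universal $C_k$. For the even indices and for $k=1$ I would simply set $C_{2n} := B_{2n}$ and $C_1 := A_1$, so that the first three relations of~\eqref{cB} hold over $R_B$ by definition. For the odd indices $k = 2n+1 \geqslant 3$ the relation $B_{2n+1} = 2 C_{2n+1}$ forces $C_{2n+1}$ to be a half of $B_{2n+1}$; this is exactly the content of the preceding Corollaries, which give $B_{2n+1} \in 2 R_B$, equivalently a well-defined series $B'(u)/2 \in R_B[[u]]$. Concretely these are the elements $C_{2n+1}$ built in~\eqref{cbm} from the series $M(u)$: from $B'(u) = 2 M(u)$ one reads off $(2n+1) B_{2n+1} = 2 M_{2n+1}$, whence $C_{2n+1} = M_{2n+1} - n B_{2n+1} = B_{2n+1}/2$ in $R_B$, so that $B_{2n+1} = 2 C_{2n+1}$ holds over $R_B$.

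It remains to check the three families of $A$-relations in~\eqref{cB} over $R_B$. These are precisely the assertions $S_{2n+1} = 0$, $S_{4n} = 0$ and $S_{4n+2} = 0$ for the elements $S_k$ introduced in the proof of the Theorem $R_B = R_K$: by definition $S_k$ is the difference between $A_k$ and the right-hand side of the corresponding relation in~\eqref{cB}, and that proof establishes $S_k = 0$ in $R_B$ for all $k \geqslant 3$. Hence \eqref{cB} holds in $R_B$, and pushing it forward by $h$ proves the Corollary.

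The one genuinely non-formal input is the divisibility $B_{2n+1} \in 2 R_B$ together with the \emph{exact} vanishing $S_k = 0$ (rather than merely $2 S_k = 0$) in the universal ring. Over an arbitrary $R$ one cannot in general halve $B_{2n+1}$, nor is there any a priori reason for the $A$-relations to hold; the whole point is that the halving, and the elimination of the $2$-torsion obstruction coming from the coefficient~\eqref{e14} at $v^2 w$, are carried out once and for all inside $R_B$ in the Theorem, and only afterwards transported to $R$ by the ring homomorphism $h$. Thus the main obstacle has already been overcome in the universal case, and the present statement follows by functoriality.
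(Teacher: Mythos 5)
Your proof is correct and takes essentially the same route as the paper: the Corollary is stated there without separate proof precisely because the relations \eqref{cB} were already established in the universal ring $R_B$ (with $C_1 = A_1$, $C_{2n} = B_{2n}$, $C_{2n+1} = M_{2n+1} - n B_{2n+1}$ as in \eqref{cbm}, and the $A$-relations being the vanishing $S_k = 0$ from the proof of the Theorem), and they are transported to an arbitrary $R$ by the classifying homomorphism $R_B \to R$. Your explicit identification of the non-formal inputs --- the divisibility $B_{2n+1} \in 2 R_B$ and the exact (not merely $2$-torsion) vanishing of the $S_k$ in the universal ring --- is exactly the content the paper's Theorem supplies.
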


\begin{cor} \label{cor32}
The generators $C_n$, $n \geqslant 1$ of the ring $R_B$ can be expressed from the relations \eqref{cB} by
\begin{align} \label{C1}
 C_{1} &= A_{1}, & C_{2n} = B_{2n}, 
\end{align}
and the recurrent relation
\begin{align} \label{C2}
 C_{2n+1} &= A_{2n+1} + (n-1) B_{2n+1} - (3-2n) \left(\sum_{k=1}^{n} C_k C_{2n-k+1} \right) - (n-2) C_1 C_{2n}. 
\end{align}
\end{cor}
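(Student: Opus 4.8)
The plan is to read \eqref{cB} as a system that determines the generators $C_n$ recursively: the even-indexed generators and $C_1$ come for free, while each odd-indexed generator is extracted from a pair of relations by a Bézout combination. First I would observe that the first line of \eqref{cB} gives \eqref{C1} verbatim, since $A_1 = C_1$ and $B_{2n} = C_{2n}$ are already in the required form; this determines $C_1$ and all $C_{2n}$ with no further work.

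The substance of the argument is the extraction of $C_{2n+1}$. Given that the $C_k$ exist and satisfy \eqref{cB} (as established above), I would use the two relations
\[
A_{2n+1} = (3-2n)\Bigl(C_{2n+1} + \sum_{k=1}^{n} C_k C_{2n-k+1}\Bigr) + (n-2) C_1 C_{2n}, \qquad B_{2n+1} = 2 C_{2n+1}
\]
simultaneously. Here the sum $\sum_{k=1}^{n} C_k C_{2n-k+1}$ pairs the index sets $\{1,\dots,n\}$ and $\{n+1,\dots,2n\}$, so it never contains $C_{2n+1}$, and neither does $C_1 C_{2n}$; hence both displayed relations are linear in the single unknown $C_{2n+1}$, with integer coefficients $3-2n$ and $2$ and with all remaining terms of strictly lower index.

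The key point is that $3-2n$ is odd and therefore coprime to $2$, with the explicit Bézout identity $1\cdot(3-2n) + (n-1)\cdot 2 = 1$. Adding the first relation to $(n-1)$ times the second collapses the coefficient of $C_{2n+1}$ to $1$ and reproduces \eqref{C2} exactly; since no division is carried out, the identity holds over $\mathbb{Z}$. As the right-hand side of \eqref{C2} involves only $A_{2n+1}$, $B_{2n+1}$, and quantities $C_k$ with $k\leqslant 2n$, this is a genuine recurrence, and together with \eqref{C1} it determines every $C_n$. The only step that is not pure bookkeeping is recognizing this coprimality together with the specific Bézout coefficients $(1, n-1)$ that yield precisely the stated formula rather than some equivalent combination.
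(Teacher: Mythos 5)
Your proposal is correct and matches the derivation the paper intends (the paper states this corollary without a written-out proof, as a direct algebraic consequence of \eqref{cB}): since $B_{2n+1}=2C_{2n+1}$ and the sum $\sum_{k=1}^{n}C_kC_{2n-k+1}$ involves only indices up to $2n$, the combination $A_{2n+1}+(n-1)B_{2n+1}$ with $(3-2n)+2(n-1)=1$ isolates $C_{2n+1}$ over $\mathbb{Z}$, which is exactly \eqref{C2}. Your identification of the Bézout coefficients $(1,\,n-1)$ and the check that the recursion is well-founded are precisely the content of the paper's implicit argument.
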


These explicit formulas give unambigeous expressions for the multiplicative generators of the ring of coefficients of the Buchstaber formal group.

\begin{cor}[From Theorem \ref{tBU}]
The ring $R_B$ is multiplicatively generated by the elements $C_n$ determined in Corollary \ref{cor32}, where $n = 1, 2, 3, 4$, $n = p^r$ (where $r \geqslant 1$ and $p$ is prime), and $n = 2^k-2$ (where $k>2$). For~the~function $\rho(n) = \rho_B(n)$ we have
\[
\rho_B(n) = \left\{ \begin{matrix}
                     \infty & \text{ for } n = 1,2,3,4,\\ 
                     p & \text{ for } n = p^r, n \ne 2,3,4, \text{ and } r \geqslant 1 ,\\
                     2 & \text{ for } n = 2^k-2, k \geqslant 3,\\ 
                     1 & \text{ in all other cases.}\\ 
                    \end{matrix}
 \right.
\]
\end{cor}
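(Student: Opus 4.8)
The plan is to deduce this corollary from Theorem \ref{tBU} by exhibiting $\{C_n\}$ as an alternative set of multiplicative generators equivalent to the set $\{\varepsilon_n\} = \{h_B(e_n)\}$ used there. Concretely, I would show that for every $n$ the classes $\hat C_n$ and $\hat e_n$ generate one and the same cyclic subgroup of the indecomposable quotient $\hat R_B = R_B/I^2$. Granting this, the listed $\hat C_n$ generate $\hat R_B$ in positive degrees because the listed $\hat e_n$ do (Corollary \ref{TBg}); since $R_B$ is a connected graded ring generated over $\mathbb{Z}$ in positive degrees, a family of homogeneous elements generates it as a ring as soon as their classes generate $\hat R_B$ (induction on degree), so the $C_n$ on the list generate $R_B$. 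Moreover the order of $\hat C_n$ equals the order $\rho_B(n)$ of $\hat e_n$, which is the displayed table. Thus everything reduces to the subgroup comparison.

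To carry out the comparison I would first record, in $\hat R_B$, the proportionality of the quadratic part of \eqref{FB} coming from the associativity relation modulo decomposables (the degree-$(n+1)$ symmetric cocycle condition): for $i+j=n+1$ one has $\hat a_{i,j} = \frac{1}{d(n+1)}\binom{n+1}{i}\,\hat e_n$, so that the element $\hat e_n$ defined by \eqref{en} is the generator of the degree-$n$ indecomposables. Next I would linearize \eqref{FB} modulo decomposables; extracting the two coefficients gives $\hat a_{1,m} = \hat B_m$ and $\hat a_{2,m-1} = 2\hat B_m - \hat A_m$, whence $\hat B_m = \frac{m+1}{d(m+1)}\hat e_m$ and $\hat A_m = 2\hat B_m - \frac{1}{d(m+1)}\binom{m+1}{2}\hat e_m$ for $m\geq 3$, together with $\hat C_1 = \hat A_1 = \hat e_1$.

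The decisive step is to substitute these into the defining relations \eqref{C1}--\eqref{C2} and watch the coefficients collapse. For even $m=2n$ this is immediate: $\hat C_{2n} = \hat B_{2n} = \frac{m+1}{d(m+1)}\hat e_{2n}$. For odd $m=2n+1$ the particular coefficient $(n-1)$ attached to $B_{2n+1}$ in \eqref{C2} is engineered precisely so that the factors $(3-2n)$ and $2(n-1)$ combine to $1$, leaving $\hat C_{2n+1} = \hat A_{2n+1} + (n-1)\hat B_{2n+1} = \frac{n+1}{d(2n+2)}\hat e_{2n+1}$. In every degree one therefore obtains $\hat C_m = c_m\,\hat e_m$ with the explicit integer $c_m = \frac{m+1}{d(m+1)}$ for even $m$ and $c_m = \frac{m+1}{2\,d(m+1)}$ for odd $m$.

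It remains to verify that each $c_m$ is invertible modulo the order $\rho_B(m)$, which is the only real obstacle and where the arithmetic of $d(\cdot)$ must be matched against the table of $\rho_B$ case by case. When $\rho_B(m)=\infty$, i.e. $m\in\{1,2,3,4\}$, one checks directly $c_m=1$. When $\rho_B(m)=2$, i.e. $m=2^r$ with $r\geq 3$ or $m=2^k-2$ with $k\geq 3$, both $m+1$ and $d(m+1)$ are odd, so $c_m$ is a quotient of odd integers and hence odd. When $\rho_B(m)=p$ is an odd prime, so $m=p^r$, one uses $m+1\equiv 1\pmod p$ together with $d(m+1)\in\{1,2\}$ to conclude $p\nmid c_m$. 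In each case $c_m$ is a unit modulo $\rho_B(m)$, so $\hat C_m$ and $\hat e_m$ generate the same cyclic subgroup of $\hat R_B$ and have the same order, with $\hat C_m=0$ exactly when $\hat e_m=0$, i.e. off the list. This completes the reduction to Theorem \ref{tBU}.
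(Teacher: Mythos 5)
Your argument is correct in substance, but it is not the route the paper takes --- indeed the paper gives no computation here at all. In the paper this corollary is meant to be immediate from what precedes it: Corollary \ref{cor32} already exhibits $\{C_n\}_{n\geqslant 1}$ as a complete set of multiplicative generators of $R_B$ with exactly one generator in each weight, and the remark quoted from \cite{BU} (Remark 4.4 there) states that for a generating formal group the function $\rho$ and the identification of the needed generators (those $n$ with $\rho(n)>1$) do not depend on the choice of such a set; moreover $\rho_B$ is \emph{defined} through the elements $e_n$ of \eqref{en}, so the table is literally that of Theorem \ref{tBU}, and the generation statement transfers because in each weight the classes of $C_n$ and of $h_B(e_n)$ automatically generate the same cyclic component of $R_B/I^2$ --- there is nothing left to verify. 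You instead re-prove this equivalence of generating sets from scratch: you invoke Lazard's symmetric $2$-cocycle lemma to write $\hat a_{i,j}=\tbinom{n+1}{i}\hat e_n/d(n+1)$, linearize \eqref{FB} to get $\hat a_{1,m}=\hat B_m$ and $\hat a_{2,m-1}=2\hat B_m-\hat A_m$, deduce explicit constants with $\hat C_m=c_m\hat e_m$, and check case by case (using Kummer's theorem for $d(n+1)$) that each $c_m$ is a unit modulo $\rho_B(m)$. This is heavier, and it never uses the generation statement of Corollary \ref{cor32} nor the choice-independence remark; in exchange it yields genuinely extra arithmetic content that the paper does not state, namely the exact proportionality constants $c_m=(m+1)/d(m+1)$ for even $m$ and $c_m=(m+1)/\bigl(2d(m+1)\bigr)$ for odd $m\geqslant 3$. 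Two small repairs: your blanket formula for odd $m$ fails at $m=1$ (it would give $c_1=1/2$; the recurrence \eqref{C2} only starts at $m=3$, so $m=1$ must remain the special case $\hat C_1=\hat A_1=\hat e_1$, which you do use in the final check); and the $\rho_B$-table half of the corollary needs no argument at all, since $\rho_B$ is defined via the $e_n$, so your verification that $\hat C_n$ has the same order is a bonus rather than a required step.
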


\begin{cor}[From Theorem \ref{tBU}]
The ring $R_K$ is multiplicatively generated by the elements $\chi_n$, where $n = 1, 2, 3, 4$, $n = p^r$ (where $r \geqslant 1$ and $p$ is prime), and $n = 2^k-2$ (where $k>2$). For~the~function $\rho(n) = \rho_K(n)$ we have
\[
\rho_K(n) = \left\{ \begin{matrix}
                     \infty & \text{ for } n = 1,2,3,4,\\ 
                     p & \text{ for } n = p^r, n \ne 2,3,4, \text{ and } r \geqslant 1 ,\\
                     2 & \text{ for } n = 2^k-2, k \geqslant 3,\\ 
                     1 & \text{ in all other cases.}\\ 
                    \end{matrix}
 \right.
\]
\end{cor}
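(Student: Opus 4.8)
The plan is to obtain this statement by transporting the preceding corollary (the description of $R_B$ through the generators $C_n$) across the explicit isomorphism $R_B = R_K$ established above. First I would record that the relations \eqref{cB} defining the generators $C_k$ of $R_B$ are, term by term, exactly the relations \eqref{cBK} produced in the proof of Lemma \ref{L1} after the substitution $C_k \mapsto \chi_k$. Hence the isomorphism $R_B \to R_K$ is the graded ring isomorphism sending each $C_n$ to $\chi_n$, and the preceding corollary immediately gives that $R_K$ is multiplicatively generated by the $\chi_n$ with $n \in \{1,2,3,4\}$, $n = p^r$, or $n = 2^k - 2$.

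It then remains to check that $\rho_K = \rho_B$. For this I would use that, by Lemma \ref{L1}, the universal Krichever formal group written in the form \eqref{FB} (with $A_K, B_K$ as in \eqref{AB}) coincides with the universal Buchstaber formal group under the identification $C_k = \chi_k$; in particular the two families of coefficients $a_{i,j}$ agree, so the elements $e_n$ defined by \eqref{en} agree as well. Since the isomorphism is graded and carries generators to generators, it maps the ideal $I^2$ of decomposables of $R_B$ onto that of $R_K$ and induces an isomorphism $\hat R_B \cong \hat R_K$ under which $\hat e_n$ is sent to $\hat e_n$. The order of $\hat e_n$ is therefore the same computed in either ring, so $\rho_K(n) = \rho_B(n)$, and the stated values follow from Theorem \ref{tBU}.

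The one point requiring care, and the step I expect to be the main obstacle, is precisely this invariance of $\rho$ under the isomorphism: one must know that $\rho$ is an intrinsic invariant of the generating formal group law, depending only on the coefficients $a_{i,j}$ and not on the chosen presentation of the coefficient ring or on the auxiliary integers $\lambda_i$ in \eqref{en}. This is guaranteed by the Remark following the definition of $\rho$, and since $F_B$ and $F_K$ are literally the same law, their $\rho$-functions must coincide.
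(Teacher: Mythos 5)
Your proposal is correct and follows exactly the route the paper intends: the corollary is stated as an immediate consequence of Theorem \ref{tBU} via the isomorphism $R_B = R_K$, which by construction (matching the relations \eqref{cB} with \eqref{cBK}) identifies $C_n$ with $\chi_n$ and carries the universal Buchstaber formal group coefficientwise onto the universal Krichever one, so that $e_n$, $I^2$, and hence $\rho$ are preserved. Your explicit verification that $\rho$ is an invariant of the formal group law itself (not of the presentation) is precisely the detail the paper leaves implicit.
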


\begin{prb}
Express the torsion elements in $R_B$ in terms of $C_K$.
\end{prb}

\vfill
\eject

\end{document}